\theoremstyle{plain}
\newtheorem{lemma}{Lemma}
\newtheorem{theorem}{Theorem}
\theoremstyle{remark}
\newtheorem{remark}{Remark}
\newcommand{\R}{\mathbb{R}}
\newcommand{\ci}{\mathfrak{i}}
\newcommand{\dif}{\mathsf{d}}
\newcommand{\lp}{\mathsf L}
\newcommand{\EX}{\mathbf{E}}
\journal{arXiv}
\begin{document}
\begin{frontmatter}
\title{The Order Barrier for Strong Approximation \\ of Rough Volatility Models}
\author[label1]{Andreas Neuenkirch}%
\ead{neuenkirch@kiwi.math.uni-mannheim.de}
\cortext[cor1]{Corresponding author}
\author[label1]{Taras Shalaiko\corref{cor1}}
\ead{tshalaik@uni-mannheim.de}
\address[label1]{University of Mannheim, Institute of Mathematics, A5,6, Mannheim, D-68131, Germany}

\begin{abstract} 
We study the strong approximation of a rough volatility model, in which the log-volatility  is  given by a fractional Ornstein-Uhlenbeck process with Hurst parameter $H<1/2$.   Our methods are based on 
an equidistant  discretization  of the volatility process and of the
driving Brownian motions, respectively. 
For the root mean-square error at a single point  the  optimal
rate of convergence that can be achieved by such methods is $n^{-H}$, where $n$ denotes the number of subintervals of the discretization. This rate is in particular obtained by the Euler method and
an Euler-trapezoidal type scheme.
\end{abstract}
\begin{keyword}
optimal approximation\sep lower error bounds \sep fractional Ornstein Uhlenbeck process \sep asset models with rough  volatility  \sep Euler and trapezoidal methods

\MSC[2010] 60H35 \sep 60G15 \sep 65C30

\end{keyword}
\end{frontmatter}

\section{Introduction and Main Results}
Let $B=\{ B_t, t \in \mathbb{R}\}$ be a fractional Brownian motion (fBm in what follows) with Hurst parameter $H \in (0,1/2)$, i.e. $B$ is a centered
Gaussian processes with continuous sample paths,  $B_0=0$ and mean square smoothness
$$ \EX |B_t-B_s|^2= |t-s|^{2H}, \quad s,t \in \mathbb{R}.$$
Moreover, let $V=\{ V_t, t \geq 0\}$,  $W=\{ W_t, t \geq 0\}$ be two independent  Brownian motions, $\mu \in \mathbb{R}, \lambda, \theta, s_0 >0$, $\rho \in (-1,1)$ and consider 
\begin{align}   \nonumber
 S_t &=s_0 e^{X_t}, \\ 
X_t &= - \frac{1}{2} \int_0^t e^{2 Y_s} \dif s +  \rho \int_0^{t} e^{Y_s}\dif V_s + \sqrt{1-\rho^2} \int_0^{t} e^{Y_s} \dif W_s,\\
 Y_t &=  \mu  +  \theta e^{-\lambda t }\int_{-\infty}^t e^{\lambda s} \dif B_s. \nonumber
\end{align} 
Here $X=\{ X_t, t \geq 0 \}$ models the log-price of an asset, whose log-volatility $Y=\{ Y_t, t \geq 0 \}$ is given by the stationary solution of the Langevin equation
$$ \dif Y_t = \lambda( \mu -Y_t ) \dif t + \theta \dif B_t.$$ The
 fractional Brownian motion $B$ and the Brownian motion $V$ are correlated, i.e.
$$ \EX B_t V_s =  \gamma(t,s), \qquad   t \in \mathbb{R}, \, s \geq 0,$$
for some suitable, i.e. in  particular positive definite, function $\gamma: \R \times [0, \infty) \rightarrow \mathbb{R}$, while $B$ and $W$ are independent, i.e.
$$ \EX B_t W_s =0, \qquad t \in \mathbb{R}, \, s \geq 0.$$
Such a model has been proposed by Gatheral, Jaisson and Rosenbaum based on striking empirical evidence
that  the log-volatility of assets behaves essentially as  fBm with with $H \approx 0.1$, see \cite{GJR}. This model has been further analysed in \cite{BFG}.

In this manuscript, we will study the optimal mean square approximation of $X_T$ based on \begin{align} V_0, V_{T/n}, \ldots, V_T,\, \, \,
W_0, W_{T/n}, \ldots, W_T, \, \, \,  Y_{0},Y_{T/n}, \ldots, Y_T. \label{info}
\end{align} The fractional Ornstein-Uhlenbeck process (fOUp) $Y$ is a Gaussian process with known mean covariance function and thus exact joint simulation of $V,W,Y$ at a finite number of time points is
possible. Clearly, the optimal mean square approximation of $X_T$ using \eqref{info} is given by
\begin{align} X_n^{\sf opt}=\EX \big ( \, X_T  \,  \big | \, V_{kT/n}, W_{kT/n}, Y_{kT/n}, \, k=0, \ldots, n \, \big )\end{align}
and the corresponding minimal errors are
\begin{align} e(n)=  \left( \EX|X_T-  X_n^{\sf opt}|^2 \right)^{1/2} . \end{align}

We will show that rough volatility models are numerically tough in the sense that they admit only low convergence rates for the mean square approximation based on the information given by \eqref{info}. More precisely, we will show that
\begin{align*}
\liminf_{n \rightarrow \infty} \,  \left(\frac{n}{T}\right)^{2H}  e(n)^2 \geq (1-\rho^2) \frac{1}{(2H+1)(2H+2)}  T \theta^2  {\bf E}|e^{Y_0}|^2 ,
\end{align*}
see Theorem \ref{lower_b}. Moreover, the optimal convergence rate $n^{-H}$ is obtained by the Euler method
\begin{align}
\label{Euler}
X_n^{ {\sf E}} &= - \frac{1}{2}  \sum_{k=0}^{n-1} e^{2 Y_{k \Delta}} \Delta   +  \rho   \sum_{k=0}^{n-1}  e^{Y_{k \Delta}} \Delta_k V  + \sqrt{1-\rho^2} 
 \sum_{k=0}^{n-1}  e^{Y_{k \Delta}} \Delta_k W, 
\end{align}
and the trapezoidal scheme
\begin{align}
\label{Trapez}
X_n^{{\sf Tr}} &= - \frac{1}{4} \sum_{k=0}^{n-1} \left( e^{2 Y_{k \Delta}} + e^{2 Y_{(k+1) \Delta}} \right) \Delta  +  
\rho   \sum_{k=0}^{n-1}  e^{Y_{k \Delta}} \Delta_k V
+ \frac{1}{2 }  \sqrt{1-\rho^2}  \sum_{k=0}^{n-1} \left( e^{ Y_{k \Delta}} + e^{Y_{(k+1) \Delta}} \right) \Delta_k W,
\end{align}
where $\Delta =T/n$ and $$ \Delta_k V= V_{(k+1)\Delta} -V_{ k \Delta}, \quad \Delta_k W= W_{(k+1)\Delta} -W_{ k \Delta}, \qquad k=0, \ldots, n-1.$$
For these schemes we have
$$ \lim_{n \rightarrow \infty} \, \left(\frac{n}{T}\right)^{2H}    \EX|X_T-  X_n^{\sf E}|^2    =  \frac{ 1 }{ 2H+ 1}   T \theta^2  {\bf E}|e^{Y_0}|^2 ,
$$
and
$$ \lim_{n \rightarrow \infty} \, \left(\frac{n}{T}\right)^{2H}   \EX|X_T-  X_n^{\sf Tr}|^2  = \left(\frac{1}{2H+1}- \frac{1-\rho^2}{4} \right)    T \theta^2 {\bf E}|e^{Y_0}|^2 , $$
see Theorem \ref{upper_b}.
Note that
$$ {\bf E}|e^{Y_t}-e^{Y_s}|^2 = \theta^2 {\bf E} |e^{Y_0}|^2 \cdot |t-s|^{2H} + o(|t-s|^{2H}) \qquad \textrm{for} \quad  |t-s| \rightarrow 0,$$
i.e. the limiting constants on the right hand side of the above expressions depend on the H\"older constant of the mean square smoothness of the volatility process $ \{e^{Y_t},\, t \geq 0\}$ .

The remainder of the manuscript is structured as follows. In the next section, we collect several properties of the stationary fractional Ornstein-Uhlenbeck process and other auxiliary results. Section 3 is devoted to
 the analysis of the Euler and the trapezoidal method, while in Section 4 we establish the lower bound for the minimal errors. Finally, in Section 5, we discuss the 
 joint
 simulation of the fractional 
Ornstein-Uhlenbeck process and Brownian motion.

\bigskip

\begin{remark} \cite{BFG} proposed that the correlation between $B$ and $V$ is introduced using the 
Mandelbrot--van Ness representation of fBm (\cite{MvN}), i.e. $V$ is in fact a two-sided Brownian motion $V=\{V_t, t \in \mathbb{R} \}$ and
$$ B_t= G(H)\int_{\mathbb{R}} ( (t-s)_+^{H-1/2}-(-s)_+^{H-1/2}) \dif V_s, \qquad t \in \mathbb{R},$$ with
$$ G(H)^2 = \frac{ 2H \, \Gamma(3/2-H)}{\Gamma(H+1/2) \Gamma(2-2H)}. $$ This leads to a correlation structure with
$$ \gamma(t,s) =\frac{G(H)}{H+1/2}\left( t^{H+1/2}- (t- \min \{t,s \})^{H+1/2} \right) {\mathbf 1}_{[0,\infty) \times [0, \infty)}(t,s), \qquad t,s \geq 0.$$ 
\end{remark}

\bigskip

\begin{remark}
  For It\=o stochastic differential equations (SDEs) driven by Brownian motion the minimal errors $e(n)$ have been  studied in detail, also for more general discretizations of the
Brownian motion, see \cite{mg} for a survey of the respective results. For stochastic differential equations driven by fractional Brownian motion minimal errors
have been studied for $H>1/2$ in \cite{spa} for the scalar case, respectively in \cite{levy} for the fractional L\'evy area.
\end{remark}

\bigskip



\begin{remark} In Section 5 we will point out that simulating  $$V_0, V_{T/n}, \ldots, V_T,\, \, \,
W_0, W_{T/n}, \ldots, W_T, \, \, \,  Y_{0},Y_{T/n}, \ldots, Y_T$$
exactly has a computational cost (number of random numbers and number of arithmetic operations) of order $n^2$, up to the best of our knowledge. This makes the  barrier of order $H$
even worse. In our future work, we will therefore study the approximation of $Y$ and $B$ via the Mandelbrot-van Ness representation of fBm similar to the recent work of \cite{Pak} 
and  also weak approximation methods.

\end{remark}

\section{Preliminaries}

\subsection{Fractional Brownian motion} Let $(\Omega, \mathcal{F}, \mathbf{P})$ be a probability space and recall that
a centered Gaussian process $B=\{B_t, t\in \mathbb{R} \}$ is called a fractional Brownian motion with Hurst index $H\in (0,1)$ if its covariance function equals
$$ K(s,t)=\frac 1 2 (t^{2H}+s^{2H}-|t-s|^{2H}), \qquad s,t \in \mathbb{R}.$$

Since $\EX|B_t-B_s|^2=|t-s|^{2H}$ there exists a modification with $\gamma$-H\"older trajectories for any $\gamma<H$, which we consider in what follows.
The process $B^H$ is moreover $H$-self-similar, i.e. for all $c>0$  we have
$$ \big \{c^{-H}B_{ct}, t\in  \mathbb{R} \big \}\stackrel{\mathsf{law}}{=}\{B_t, t \in \mathbb{R}\},$$
and is shift invariant, i.e. for any $s \in \mathbb{R}$ the process
$\{B_{t+s}-B_s, t\in  \mathbb{R} \}$ is a again an fBm.
Furthermore, fBm has polynomial growth as $|t| \rightarrow \infty$, i.e.  there exists a set $\mathcal{A} \in \mathcal{F}$ with $P(A)=1$ and a random variable $K$ such that
\begin{align}\label{subexp}
 |B_t(\omega)| \leq K(\omega)(1 + |t|^2), \qquad t \in \mathbb{R},\quad \omega \in \mathcal{A},
\end{align}
see  \cite{MS}.
In the sequel we will change
$\Omega$ such that $B_{\cdot}(\omega)=0$ for $\omega \notin \mathcal{A}$.

\subsection{Young integration}

Let $\alpha,\beta \in (0,1)$ such that $\alpha +\beta >1$ and  consider two H\"older functions $f\in \mathcal C^{\alpha}([0,T]; \mathbb{R})$, $g\in \mathcal C^{\beta}([0,T]; \mathbb{R})$. 
Then the Young integral $\int_0^T f(t)\dif g(t)$ is defined as the limit of the corresponding Riemann-Stieltjes sums, see e.g. \cite{young}.

Therefore, due to the H\"older smoothness of the sample paths  of fBm the  pathwise Riemann-Stieltjes integrals
$$ \int_s^t a(\tau) \, dB_{\tau}, \qquad -T \leq s <t \leq T,$$
exist if $a \in C^{\alpha}([-T,T]; \mathbb{R})$ with $\alpha +H > 1$. Moreover, if
$-T \leq s_1 <t_1 \leq s_2 <t_2 <T$,  we have the fractional It\=o isometry
\begin{align}
\label{frac_iso_2}
  \mathbf{E} \int_{s_1}^{t_1} a(\tau_1) \, dB_{\tau_1} \int_{s_2}^{t_2} a(\tau_2) \, dB_{\tau_2} = H(2H-1)
\int_{s_1}^{t_1} \int_{s_2}^{t_2} a(\tau_1)a(\tau_2) |\tau_1 -\tau_2|^{2H-2} \, d\tau_2 d\tau_1.
\end{align}

Property \eqref{subexp} implies that the improper Riemann-Stieltjes integrals
$$ \int_{-\infty}^t e^{\lambda  s} \dif B_s(\omega), \qquad t \in \mathbb{R}, \quad \omega \in \Omega,$$
are well-defined and that the integration by parts relation
\begin{align}
 \label{ibp}  \int_{-\infty}^t e^{\lambda  s} \dif B_s(\omega)=e^{\lambda t} B_t(\omega) - \lambda    \int_{-\infty}^t e^{\lambda u} B_u(\omega) \dif u, \qquad t \in \mathbb{R}, \quad \omega \in \Omega,
\end{align}
holds.

Moreover, for these integrals the isometry \eqref{frac_iso} is still valid, see \cite{maejima}:
\begin{align}
\label{frac_iso}
  \mathbf{E} \int_{-\infty}^{t_1} e^{\lambda \tau_1} \, dB_{\tau_1} \int_{t_2}^{t_3} e^{ \lambda \tau_2}  \, dB_{\tau_2} = H(2H-1)
\int_{-\infty}^{t_1}   \int_{t_2}^{t_3}  e^{ \lambda (\tau_1 + \tau_2)} |\tau_1 -\tau_2|^{2H-2} \, d\tau_2 d\tau_1
\end{align}
for $ - \infty <t_1 \leq t_2 \leq t_3 < \infty$.

\subsection{It\=o integration}
Throughout this manuscript, we assume that
$(B_t,W_t,V_t)_{t \geq 0} $ are $(\mathcal{F}_t)_{t \geq 0}$-adapted, where this filtration is constructed from the canonical filtration by the usual extension procedure, see e.g. Chapter 2.7 in \cite{KS}, and
that $(B_t)_{t<0}$ is $\mathcal{F}_0$-measurable.
Consequently, we must have that
\begin{align} \label{corr_struc} \gamma(t,s)=\gamma(t,t) \qquad \textrm{for all} \quad s \geq t, \end{align}
since the adaptedness implies that $B_t$ and $V_{t+h}-V_t$ with  $h>0$ are independent, and in particular
\begin{align} \gamma(t,s)=0 \qquad \textrm{for all} \quad t \leq 0 \leq  s.  \end{align}

Under the above adaptedness assumption the stochastic integrals
$$ \int_0^t e^{Y_s} \dif V_s, \qquad \int_0^t e^{Y_s} \dif W_s, \qquad t \in [0,T],$$
are standard It\=o integrals and we can use all classical tools as the It\=o isometry, Burkholder-Davis-Gundy inequality etc. 

Furthermore, we have the following Lemma:
\begin{lemma} \label{lem_cond_help_2}
Let  $R=(R_t)_{t\in [0,T]}$ be a process, which is $(\mathcal{F}_t)_{t \in [0,T]}$ adapted, independent of $W=(W_t)_{t\in [0,T]}$, and has root mean-square smoothness of order $\alpha \in (0,1)$, i.e.
there exist  $C>0$, $\alpha \in (0,1)$, such that
$$  \left( \mathbf{E} |R_t-R_s|^2 \right)^{1/2} \leq C|t-s|^{\alpha}, \qquad s,t \in [0,T].$$ Moreover, let $t_k=kT/n,\,k=0,\ldots,n$,
and  $$\overline{W}^n_t = W_{t_k} + \left(\frac{nt}{T} -k \right) \left( W_{t_{k+1}} -   W_{t_k}  \right), \qquad t \in [t_k, t_{k+1}], \,\, k=0, \ldots, n-1. $$
Then, it holds
\begin{gather*}
\EX \left|\int_0^T R_s\dif W_s- \frac{1}{2} \sum_{k=0}^{N-1} (R_{t_k}+R_{t_{k+1}})\Delta_k W \right|^2=\sum_{k=0}^{n-1}\int_{t_k}^{t_{k+1}}\EX \left|R_t- \frac{1}{2} (R_{t_k} + R_{t_{k+1}})\right|^2  \dif t
\end{gather*}
and
\begin{gather*}
\EX \left|\int_0^T R_s\dif W_s- \int_0^T R_s \dif  \overline{W}^n_s \right|^2=\sum_{k=0}^{n-1}\int_{t_k}^{t_{k+1}}\EX \left|R_t- \frac{n}{T}\int_{t_k}^{t_{k+1}}R_s\dif s\right|^2  \dif t.
\end{gather*}

\end{lemma}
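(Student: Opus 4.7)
The plan is to enlarge the underlying filtration so that $R$ becomes trivially adapted, and then reduce both identities to direct applications of the It\=o isometry. Set $\mathcal G_t := \mathcal F_t^W \vee \sigma(R_s,\ s\in[0,T])$. Since $R$ is independent of $W$, the process $W$ remains a Brownian motion with respect to $(\mathcal G_t)_{t\in[0,T]}$, while every $R_s$, and indeed any measurable functional of the path of $R$, is $\mathcal G_0$-measurable and hence $(\mathcal G_t)$-adapted. A standard approximation by simple integrands shows that the $(\mathcal F_t)$-It\=o integral $\int_0^T R_s\,\dif W_s$ coincides with the corresponding $(\mathcal G_t)$-It\=o integral, and likewise for any integrand built from the path of $R$.

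For the first identity, writing $\Delta_k W = \int_{t_k}^{t_{k+1}}\dif W_s$ the error becomes a single $(\mathcal G_t)$-It\=o integral
\begin{equation*}
\int_0^T R_s\,\dif W_s-\frac{1}{2}\sum_{k=0}^{n-1}(R_{t_k}+R_{t_{k+1}})\Delta_k W = \int_0^T \widetilde f_n(s)\,\dif W_s,
\end{equation*}
where $\widetilde f_n(s) = R_s - \frac{1}{2}(R_{t_k}+R_{t_{k+1}})$ for $s\in[t_k,t_{k+1})$. The integrand $\widetilde f_n$ is $(\mathcal G_t)$-adapted and belongs to $L^2([0,T]\times\Omega)$ thanks to the mean-square H\"older assumption (together with $\mathbf E|R_0|^2<\infty$, which is implicit in the hypothesis). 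The $(\mathcal G_t)$-It\=o isometry then gives the asserted formula directly; the cross terms between different subintervals vanish automatically because the right-hand side is a single stochastic integral.

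For the second identity, on $[t_k,t_{k+1}]$ the piecewise linear interpolant $\overline W^n$ has derivative $\Delta_k W/\Delta$ with $\Delta=T/n$, so
\begin{equation*}
\int_0^T R_s\,\dif\overline W^n_s = \sum_{k=0}^{n-1}\overline R_k\,\Delta_k W = \sum_{k=0}^{n-1}\int_{t_k}^{t_{k+1}}\overline R_k\,\dif W_s,\qquad \overline R_k := \frac{n}{T}\int_{t_k}^{t_{k+1}}R_u\,\dif u.
\end{equation*}
Exactly as before, the error is a single $(\mathcal G_t)$-It\=o integral whose integrand equals $R_s-\overline R_k$ on $[t_k,t_{k+1})$, and the isometry delivers the claimed representation.

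The only non-routine step is justifying the It\=o isometry for integrands like $R_{t_{k+1}}$ or $\overline R_k$, which fail to be $\mathcal F_s$-measurable for $s<t_{k+1}$ in the original filtration; the enlargement to $(\mathcal G_t)$, which is legitimate precisely because $R$ and $W$ are independent, removes this obstacle and makes both identities immediate consequences of the classical isometry.
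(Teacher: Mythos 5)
Your proof is correct, but it takes a genuinely different route from the paper's. You enlarge the filtration to $\mathcal G_t=\mathcal F^W_t\vee\sigma(R_s,\ s\in[0,T])$, use the independence of $R$ and $W$ to argue that $W$ remains a Brownian motion for $(\mathcal G_t)$, and then obtain both identities from a single application of the It\^o isometry, since the trapezoidal weights $\tfrac12(R_{t_k}+R_{t_{k+1}})$ and the averages $\tfrac nT\int_{t_k}^{t_{k+1}}R_u\,\dif u$ become $\mathcal G_0$-measurable, hence admissible integrands, and the error is one stochastic integral; cross terms between subintervals never need separate treatment. The paper instead stays with the original filtration: it first shows the off-diagonal terms vanish by factorizing expectations via the independence of $R$ and $W$ and by conditioning on $\mathcal F_{t_{k+1}}$, and then handles each diagonal term by expanding the square and computing the mixed expectation $\mathbf E\big(\int_{t_k}^{t_{k+1}}R_s\,\dif W_s\int_{t_k}^{t_{k+1}}R_s\,\dif s\,\Delta_k W\big)$ through an $L^2$ Riemann-sum approximation of the It\^o integral. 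Your argument is shorter and isolates the structural reason the lemma holds (independence preserves the Brownian property under the enlargement, which legitimizes the apparently non-adapted weights), while the paper's computation is more elementary in that it never changes filtration, at the cost of explicit cross-term and Riemann-sum bookkeeping. Two points you pass over quickly are harmless but worth noting: the identification of the $(\mathcal F_t)$- and $(\mathcal G_t)$-integrals is cleanest by observing that for mean-square H\"older continuous $R$ both are $L^2$-limits of the same left-point Riemann sums, and square-integrability of $R_t$ (needed for the isometry and for finiteness of the right-hand sides) is indeed implicitly assumed in the paper's proof as well.
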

\begin{proof}
To simplify our notation we put $T=1$. We only proof the second assertion, the other proof is similar.
First note that
$$ \int_{t_k}^{t_{k+1}}  R_s \dif  \overline{W}^n_s =  n \Delta_k W \,  \int_{t_k}^{t_{k+1}}  R_s \dif s.  $$
Therefore we have
$$  \mathbf{E} \left( \int_{t_k}^{t_{k+1}}  R_s \dif  \overline{W}^n_s  \int_{t_l}^{t_{l+1}}  R_s \dif  \overline{W}^n_s  \right)= n^2
 \mathbf{E} \left( \int_{t_k}^{t_{k+1}}  R_s \dif  s \int_{t_l}^{t_{l+1}}  R_s \dif s  \right) \mathbf{E} \left( \Delta_k W  \Delta_l W    \right) =
0 $$
for $k \neq l$ by independence of $R$ and $W$. Moreover, using additionally the adaptedness of $R$ and the properties of the It\=o integral it holds
\begin{align*}
 \mathbf{E} \left( \Delta_k W \, \int_{t_k}^{t_{k+1}}  R_s \dif  s  \int_{t_l}^{t_{l+1}}  R_s \dif W_s   \right)
 = \mathbf{E} \left( \Delta_k W  \, \int_{t_k}^{t_{k+1}}  R_s \dif  s  \, \mathbf{E} \left( \int_{t_l}^{t_{l+1}}  R_s \dif W_s  \Big{|} \mathcal{F}_{t_{k+1}} \right) \right) =
0 
\end{align*}
and
\begin{align*}
 \mathbf{E} \left(  \int_{t_k}^{t_{k+1}}  R_s \dif  W_s \, \Delta_l W  \int_{t_l}^{t_{l+1}}  R_s \dif s   \right)
 = \mathbf{E} \left( \int_{t_k}^{t_{k+1}}  R_s \dif  W_s  \, \mathbf{E} \left( \Delta_l W \int_{t_l}^{t_{l+1}}  R_s \dif s  \Big{|} \mathcal{F}_{t_{k+1}} \right) \right) =
0 
\end{align*}
for $k < l$. So we end up with
$$  \mathbf{E} \left( \int_{t_k}^{t_{k+1}}  R_s \dif  W_s  -\int_{t_k}^{t_{k+1}}  R_s \dif  \overline{W}^n_s  \right) \left( \int_{t_l}^{t_{l+1}}  R_s \dif  W_s  - \int_{t_l}^{t_{l+1}}  R_s \dif  \overline{W}^n_s  \right)=0. $$
for $k \neq l$.

Hence it remains to study
\begin{align*}
& \mathbf{E} \left|\int_{t_k}^{t_{k+1}}  R_s \dif  W_s  -\int_{t_k}^{t_{k+1}}  R_s \dif  \overline{W}^n_s  \right|^2    \\
 & \qquad = \int_{t_k}^{t_{k+1}}  \mathbf{E}  |R_s|^2 \dif  s  + 
  n \, \mathbf{E} \left| \int_{t_k}^{t_{k+1}}  R_s \dif s \right|^2   - 2  n \, \mathbf{E}   \left( \int_{t_k}^{t_{k+1}}  R_s \dif  W_s   \int_{t_k}^{t_{k+1}} R_s \dif s \Delta_k W \right).
\end{align*}
For the last term consider the approximation  $\int_{t_k}^{t_{k+1}}R_s\dif W_s=\lp^2-\lim_{N\to \infty}S^{(k)}_N$ with $$ S^{(k)}_N=\sum_{l=0}^{N-1}R_{s_{l}^N}\Delta_{l^N}W,$$ where $\{s_l^N \}_{l=0}^N$ is a sequence of partitions of $[t_k,t_{k+1}]$ with meshsize going to zero
and $\Delta_{l^N}W= W_{s_{l+1}^N}-W_{s_l^N}$.
(The $\lp^2$-convergence holds due to the mean-square smoothness assumption.)
Then we have
\begin{align*}
& \mathbf{E}   \left( \int_{t_k}^{t_{k+1}}  R_s \dif  W_s   \int_{t_k}^{t_{k+1}} R_s \dif s \Delta_k W \right) \\ & \quad =
\lim_{N\to \infty}\sum_{l=0}^{N-1} \EX \left(R_{s_l^N} \int_{t_k}^{t_{k+1}} R _s \dif s \right)\EX(\Delta_{l^N} W \Delta_k W)=\lim_{N\to \infty}\sum_{l=0}^{N-1} \EX \left(R_{s_l^N}\int_{t_k}^{t_{k+1}}R_s\dif s \right)(s_{l+1}^N-s_l^N)
\\ & \quad = \int_{t_k}^{t_{k+1}}\EX \left(R_t\int_{t_k}^{t_{k+1}}R_s\dif s \right)\dif t.
\end{align*}
Hence we obtain
\begin{align*}
& \mathbf{E} \left|\int_{t_k}^{t_{k+1}}  R_s \dif  W_s  -\int_{t_k}^{t_{k+1}}  R_s \dif  \overline{W}^n_s  \right|^2    \\
 & \qquad = \int_{t_k}^{t_{k+1}}  \mathbf{E}  |R_s|^2 \dif  s  + 
  n \, \mathbf{E} \left| \int_{t_k}^{t_{k+1}}  R_s \dif s \right|^2   - 2  n \, \int_{t_k}^{t_{k+1}}\EX \left(R_t\int_{t_k}^{t_{k+1}}R_s\dif s \right)\dif t
\end{align*}
and summing over the subintervals yields the assertion.
\end{proof}

\subsection{Stationary fractional Ornstein-Uhlenbeck process}

As already mentioned
$$ Y_t =  \mu  +  \theta e^{-\lambda t }\int_{-\infty}^t e^{\lambda u} \dif B_u, \qquad t \in \mathbb{R},$$
is the stationary solution of the Langevin SDE
$$ \dif Y_t = \lambda( \mu -Y_t ) \dif t + \theta \dif B_t, $$ see e.g. \cite{maejima,mjg}.
The stationarity is a simple consequence of the shift invariance of fBm which gives in particular
\begin{align} \label{stationarity}
 \left(Y_t,Y_s \right)&= \mu + \left(  e^{-\lambda t }\int_{-\infty}^t e^{\lambda u} \dif B_u,  e^{-\lambda s }\int_{-\infty}^s e^{\lambda u} \dif B_u\right)
 \\ \nonumber  & \stackrel{\mathsf{law}}{=} \mu + \left(  e^{-\lambda t }\int_{-\infty}^t e^{\lambda u} \dif (B_{u-s} -B_{-s}),  e^{-\lambda s }\int_{-\infty}^s e^{\lambda u} \dif (B_{u-s} -B_{-s}) \right)
 \\ \nonumber & =  \mu + \left(  e^{-\lambda (t-s) }\int_{-\infty}^{t-s} e^{\lambda u} \dif B_{u},  \int_{-\infty}^0 e^{\lambda u} \dif B_{u}  \right) =  (Y_{t-s},Y_0)
 \end{align}
for any $s,t \in \mathbb{R}$.

The process $Y$ is Gaussian
with mean
\begin{align*} \EX  Y_0 = \mu \end{align*}
and covariance
\begin{align*} \EX (Y_s - \EX Y_s) (Y_t - \EX  Y_t) =  R_Y(|t-s|),  \qquad s,t \in \mathbb{R}, \end{align*}
where
\begin{align} \label{cov_cosh}
 R_Y(\tau) &=
 \theta^2 \left(  \frac{ \Gamma(2H+1)\cosh(\lambda \tau)}{2\lambda^{2H}} - H \int_0^{\tau} \cosh(\lambda (\tau -u)) u^{2H-1} \dif u \right), \quad \tau \geq 0.
\end{align}
In particular  we have $$  {\mathbf{V}} (Y_t)=R_Y(0) =  \theta^2   \frac{ \Gamma(2H+1)}{2\lambda^{2H}}, \qquad t \in \mathbb{R}.$$
For a derivation see the Appendix. Note that another representation of the covariance function in terms of the confluent hypergeometric function
$ {}_1F_2$ has been given in Proposition 4.1.2 in  \cite{schoechtel}, starting from the Fourier representation
$$ R_Y(\tau)= \theta^2 \frac{\Gamma(2H+1) \sin(\pi H)}{2 \pi} \int_{-\infty}^{\infty} e^{i \tau x} \frac{|x|^{1-2H}}{\lambda^2+x^2} \dif x, \quad \tau \geq 0. $$
By straightforward calculations using \eqref{cov_cosh} and $2\cosh(\lambda (\tau -u)) = e^{\lambda \tau} e^{-\lambda u } + e^{-\lambda \tau} e^{\lambda u }$ we have:
\begin{lemma}\label{cor_Y} We have $ R_Y \in \mathcal{C}^{\infty}((0,\infty); \mathbb{R})$
and
$$ R_Y(\tau)= R_Y(0) - \frac{\theta^2}{2} \tau^{2H} + o(\tau^{2H}), \qquad \tau \rightarrow 0. $$
\end{lemma}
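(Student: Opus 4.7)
My plan is to use the hint $2\cosh(\lambda(\tau-u)) = e^{\lambda\tau}e^{-\lambda u} + e^{-\lambda\tau}e^{\lambda u}$ to rewrite
\begin{equation*}
\int_0^{\tau} \cosh(\lambda(\tau-u)) u^{2H-1}\dif u
= \tfrac{1}{2}e^{\lambda\tau}\,F_-(\tau) + \tfrac{1}{2}e^{-\lambda\tau}\,F_+(\tau),
\end{equation*}
where $F_\pm(\tau) := \int_0^\tau e^{\pm\lambda u} u^{2H-1}\dif u$. On $(0,\infty)$ the functions $F_\pm$ are $\mathcal{C}^{\infty}$: their first derivatives are $\tau \mapsto e^{\pm\lambda\tau}\tau^{2H-1}$, which is smooth on $(0,\infty)$, and all subsequent derivatives stay in the class of finite sums of $\tau^{2H-1-k}$ times smooth functions. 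Since $\cosh(\lambda\tau)$ is entire, this proves the first assertion $R_Y \in \mathcal{C}^{\infty}((0,\infty);\mathbb{R})$.

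For the small-$\tau$ expansion I would treat the two pieces of $R_Y$ separately. The smooth piece gives
\begin{equation*}
\theta^2 \frac{\Gamma(2H+1)\cosh(\lambda\tau)}{2\lambda^{2H}} = R_Y(0) + \theta^2\frac{\Gamma(2H+1)}{2\lambda^{2H}} \cdot \frac{\lambda^2\tau^2}{2} + O(\tau^4),
\end{equation*}
by the Taylor expansion of $\cosh$, which contributes an error of order $O(\tau^2)$ to $R_Y(\tau)-R_Y(0)$. For the integral term I use that $\cosh(\lambda(\tau-u)) = 1 + O(\tau^2)$ uniformly for $u\in[0,\tau]$ (since $|\tau-u|\le \tau$), so
\begin{equation*}
H\int_0^{\tau} \cosh(\lambda(\tau-u)) u^{2H-1}\dif u = H\int_0^{\tau} u^{2H-1}\dif u + O(\tau^2)\cdot H\int_0^{\tau} u^{2H-1}\dif u = \frac{\tau^{2H}}{2} + O(\tau^{2H+2}).
\end{equation*}
Combining the two pieces yields
\begin{equation*}
R_Y(\tau) = R_Y(0) - \frac{\theta^2}{2}\tau^{2H} + O(\tau^2) + O(\tau^{2H+2}).
\end{equation*}

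Since $H<1/2$, we have $2H<1<2$, so both remainders are $o(\tau^{2H})$ as $\tau\to 0$, giving the claimed expansion. The analysis is essentially routine; the only point that needs a little care is checking that the $\cosh(\lambda(\tau-u))-1$ factor, together with the singular weight $u^{2H-1}$, really produces an $O(\tau^{2H+2})$ remainder rather than something of lower order, but this is immediate from the uniform bound $|\cosh(\lambda(\tau-u))-1|\le C\tau^2$ on $[0,\tau]$. No genuinely hard step arises.
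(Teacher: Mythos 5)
Your proof is correct and follows essentially the same route the paper indicates: it starts from the covariance representation \eqref{cov_cosh} and the identity $2\cosh(\lambda(\tau-u))=e^{\lambda\tau}e^{-\lambda u}+e^{-\lambda\tau}e^{\lambda u}$, which is exactly the "straightforward calculation" the paper invokes without writing it out. Your handling of the singular weight $u^{2H-1}$ and the observation that the remainders $O(\tau^2)$ and $O(\tau^{2H+2})$ are $o(\tau^{2H})$ since $H<1$ fill in the details correctly.
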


The process
$$ Z_t^{(a)} = e^{a (Y_t-\mu)}, \qquad t \in \mathbb{R},$$
is again stationary with
mean
$$ \EX Z_0^{(a)} = \exp\left(\frac{a^2}{2} R_Y(0)\right) $$
and 
covariance, again in terms of $R_Y$,
\begin{align*}
 R_{Z^{(a)}}(|t-s|)=\exp(a^2 R_Y(0)) \left( \exp(a^2 R_Y(|t-s|) )- 1 \right) . \end{align*}
Here we have exploited that the moment generating function of a $d$-dimensional  Gaussian random  variable $\xi$ with mean $m$ and covariance matrix $C$ is given by
$$ {\bf E} e^{ \langle a, \xi \rangle} = \exp \left( \langle a, m \rangle + \frac{1}{2} \langle a, Ca \rangle \right), \qquad a \in \mathbb{R}^d.   $$

 We have the following asymptotic expansion:
\begin{lemma} \label{benhenni-help} Let $a \neq 0$. We have $ R_{Z^{(a)}} \in \mathcal{C}^{\infty}((0,\infty); \mathbb{R})$
and
$$ R_{Z^{(a)}}(\tau)=  c_0-c_1\tau^{2H}+o(\tau^{2H}), \qquad \tau\to 0,$$
with 
\begin{align*}
 c_0&=c_0(a,R_Y(0))= \exp\left(a^2 R_Y(0)\right) \left( \exp\left(a^2 R_Y(0)\right) -1\right), \\
 c_1&=c_1(a,\theta,R_Y(0))=  \frac{a^2\theta^2}{2} \exp\left(2 a^2 R_Y(0)\right).
 \end{align*}
\end{lemma}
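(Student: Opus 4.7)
The plan is to reduce everything to the already-established expansion of $R_Y$ near zero (Lemma preceding this one) together with the explicit identity
$$R_{Z^{(a)}}(\tau) = \exp(a^2 R_Y(0))\bigl(\exp(a^2 R_Y(\tau)) - 1\bigr), \qquad \tau \ge 0,$$
which the paper has just derived from the Gaussian moment generating function. Smoothness on $(0,\infty)$ is immediate: $R_Y$ is $C^\infty$ on $(0,\infty)$ by the previous lemma, and $R_{Z^{(a)}}$ is obtained from it by an affine transformation composed with the real-analytic exponential function.

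For the asymptotic expansion I would set $\delta(\tau) := R_Y(\tau) - R_Y(0)$, so that by the previous lemma
$$\delta(\tau) = -\frac{\theta^{2}}{2}\tau^{2H} + o(\tau^{2H}), \qquad \tau \to 0^{+},$$
and in particular $\delta(\tau) \to 0$. Writing
$$R_{Z^{(a)}}(\tau) = \exp(a^2 R_Y(0))\Bigl(\exp(a^2 R_Y(0))\exp(a^2\delta(\tau)) - 1\Bigr),$$
I would Taylor-expand the inner exponential as $\exp(a^2\delta(\tau)) = 1 + a^2\delta(\tau) + O(\delta(\tau)^2)$. Since $H > 0$ we have $\delta(\tau)^2 = O(\tau^{4H}) = o(\tau^{2H})$, so the quadratic remainder is absorbed into an $o(\tau^{2H})$ term.

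Substituting the expansion of $\delta$ gives
$$\exp(a^2\delta(\tau)) = 1 - \frac{a^2\theta^2}{2}\tau^{2H} + o(\tau^{2H}),$$
whence
$$R_{Z^{(a)}}(\tau) = \exp(a^2 R_Y(0))\Bigl(\exp(a^2 R_Y(0))-1\Bigr) - \exp(2a^2 R_Y(0))\,\frac{a^2\theta^2}{2}\,\tau^{2H} + o(\tau^{2H}),$$
which is precisely $c_0 - c_1 \tau^{2H} + o(\tau^{2H})$ with the stated constants.

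There is no genuine obstacle here; the only point that requires a moment's thought is the quadratic remainder in the exponential expansion, and the key reason it is harmless is simply $H>0$, so that $\delta(\tau)^2 = o(\tau^{2H})$. Everything else is a direct substitution into the identity for $R_{Z^{(a)}}$.
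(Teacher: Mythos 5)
Your proposal is correct and is exactly the computation the paper intends: it starts from the identity $R_{Z^{(a)}}(\tau)=\exp(a^2 R_Y(0))\left(\exp(a^2 R_Y(\tau))-1\right)$ derived just before the lemma, composes with the expansion of $R_Y$ from Lemma \ref{cor_Y}, and the smoothness and the constants $c_0$, $c_1$ come out as stated, with the quadratic remainder controlled by $\delta(\tau)^2=O(\tau^{4H})=o(\tau^{2H})$. The paper leaves these "straightforward computations" implicit, so there is nothing to add.
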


The previous Lemma allows to use results from \cite{Benhenni} for the approximation of $ \int_0^T Z^{(a)}_s \dif s $. We need another Lemma, which follows again by straightforward computations:

\begin{lemma}\label{smoothess_Z} Let the notation of the Lemma \eqref{benhenni-help} prevail and $ s \leq u \leq  t$. We have
$$ \EX |Z^{(a)}_t -Z^{(a)}_s|^2 = 2c_1|t-s|^{2H} + o(|t-s|^{2H}),\qquad |t-s|\to 0,$$
and
\begin{align*}
&\EX \Big| Z^{(a)}_u - \frac{1}{2}(Z_s^{(a)}+Z_t^{(a)}) \Big|^2  =
c_1 \big(|t-u|^{2H} +|s-u|^{2H}-\frac 1 2 |t-s|^{2H} \big)+o(|t-s|^{2H}),\qquad |t-s| \to 0.
\end{align*}
It also holds
\begin{align*}
&\EX \big( Z^{(a)}_t - Z_u^{(a)}\big) \big(Z_t^{(a)} -   Z_s^{(a)}\big) & = c_1 \big(|t-s|^{2H}+|t-u|^{2H} -|u-s|^{2H}\big) +o(|t-s|^{2H}),\qquad |t-s| \to 0.
\end{align*}
\end{lemma}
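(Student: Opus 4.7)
The plan is to reduce each of the three identities to a linear combination of values of the covariance function $R_{Z^{(a)}}$ and then apply the asymptotic expansion
$$R_{Z^{(a)}}(\tau) = c_0 - c_1 \tau^{2H} + o(\tau^{2H}), \qquad \tau \to 0,$$
provided by Lemma~\ref{benhenni-help}. The crucial observation is that $Z^{(a)}$ is stationary, hence its mean is constant in $t$: whenever the coefficients of a linear combination $\sum_i \alpha_i Z^{(a)}_{t_i}$ sum to zero, the mean contribution vanishes and the second moment equals the variance, which in turn is a linear combination of the $R_{Z^{(a)}}(|t_i-t_j|)$.

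Concretely, for the first identity I would write
$$\EX|Z^{(a)}_t - Z^{(a)}_s|^2 = 2\bigl(R_{Z^{(a)}}(0) - R_{Z^{(a)}}(|t-s|)\bigr)$$
and insert the expansion, obtaining $2c_1|t-s|^{2H} + o(|t-s|^{2H})$ immediately. For the second identity, expanding the square and using stationarity of the mean gives
$$\EX\Big|Z^{(a)}_u - \tfrac12(Z^{(a)}_s + Z^{(a)}_t)\Big|^2 = \tfrac32 R_{Z^{(a)}}(0) - R_{Z^{(a)}}(u-s) - R_{Z^{(a)}}(t-u) + \tfrac12 R_{Z^{(a)}}(t-s);$$
substitution then yields the claim once one notes that the $c_0$ coefficients cancel: $\tfrac32 - 1 - 1 + \tfrac12 = 0$. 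For the third identity, centering (both $Z^{(a)}_t - Z^{(a)}_u$ and $Z^{(a)}_t - Z^{(a)}_s$ are mean-zero by stationarity) gives
$$\EX(Z^{(a)}_t - Z^{(a)}_u)(Z^{(a)}_t - Z^{(a)}_s) = R_{Z^{(a)}}(0) - R_{Z^{(a)}}(t-u) - R_{Z^{(a)}}(t-s) + R_{Z^{(a)}}(u-s),$$
and again the constant terms cancel ($1-1-1+1=0$), leaving exactly the stated combination of $\cdot^{\,2H}$ terms with the right sign pattern.

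The only point requiring a line of care is the control of the remainders. Since $s \le u \le t$, each of the distances $|u-s|$, $|t-u|$, $|t-s|$ is bounded by $|t-s|$, so each individual $o(\cdot^{2H})$ contribution is majorised by $o(|t-s|^{2H})$ as $|t-s|\to 0$, and a finite sum of such terms remains $o(|t-s|^{2H})$. Beyond this routine bookkeeping, I expect no real obstacle — the proof is essentially algebraic, and the only ``content'' beyond Lemma~\ref{benhenni-help} is the cancellation of the $c_0$ terms in the second and third identity, which is forced by the fact that all three expressions above are bilinear forms in increments of $Z^{(a)}$.
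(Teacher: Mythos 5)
Your proposal is correct and is exactly the argument the paper has in mind: the paper omits the proof as ``straightforward computations,'' and those computations are precisely your reduction, via stationarity and zero-sum coefficient combinations, to values of $R_{Z^{(a)}}$ followed by insertion of the expansion from Lemma~\ref{benhenni-help}. The cancellation of the $c_0$ terms and the uniform control of the $o(\tau^{2H})$ remainders for $\tau\in\{|t-u|,|u-s|,|t-s|\}$ are handled correctly.
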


\bigskip
\section{Analysis of the Euler- and Trapezoidal scheme}

Let $$X_T^{RS}= \int_0^T e^{2Y_s} \dif s =  e^{2 \mu} \int_0^T Z_s^{(2)} \dif s.$$
Since $Z^{(a)}$ is stationary and the covariance function is infinitely differentiable away from zero and admits the expansion given in Lemma \ref{benhenni-help},
one can apply Theorem 1 in \cite{Benhenni} to obtain
\begin{align} \label{RS-app} \EX \left| X_T^{RS} - \frac{1}{2} \sum_{k=0}^{n-1} \left( e^{2 Y_{k \Delta}} + e^{2 Y_{(k+1) \Delta}} \right) \Delta \right|^2 =O( \Delta^{1+2H}).
\end{align}
Since
$$\frac{1}{2} \sum_{k=0}^{n-1} \left( e^{2 Y_{k \Delta}} + e^{2 Y_{(k+1) \Delta}} \right) \Delta =   \sum_{k=0}^{n-1}  e^{2 Y_{k \Delta}} \Delta + \frac{1}{2} \left( e^{2 Y_{T}} - e^{2 Y_{0}} \right) \Delta $$ 
and $H<1/2$, we also have
$$ \EX \left| X_T^{RS} -  \sum_{k=0}^{n-1}  e^{2 Y_{k \Delta}} \Delta \right|^2 =O( \Delta^{1+2H}).
$$

\begin{theorem}\label{upper_b}
 Suppose $X^{\mathsf E}_n$ and $X^{\mathsf{Tr}}_n$, $n\geq 1$, are given by \eqref{Euler} and \eqref{Trapez}, respectively. It holds
\begin{align*}
\EX \left| X_T -X_n^{\mathsf{E} } \right|^2 &= C_{\sf E} \cdot n^{-2H} + o(n^{-2H}),\\
\EX \left| X_T -X_n^{\mathsf{Tr} } \right|^2 &= C_{\sf{Tr}} \cdot n^{-2H} + o(n^{-2H}),
\end{align*}
with \begin{align*}
C_{\sf E}&=C_{\sf E}(\mu, \lambda, \theta, H, T)=\frac{2 e^{2\mu}
c_1(1,\theta,R_Y(0)) T^{2H+1}}{2H+1},  \\ 
C_{\sf Tr}&=C_{\sf Tr}(\mu, \lambda, \theta, \rho,  H, T)=  C_{\sf E}- (1-\rho^2) \frac{e^{2\mu}c_1(1,\theta,R_Y(0))T^{2H+1}}{2}.
\end{align*}
\end{theorem}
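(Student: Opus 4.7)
I decompose each error into a drift-integral error $A_n^\sharp$ for $\int_0^T e^{2Y_s}\,\dif s$, a $V$-integral error $B_n^V$, and a $W$-integral error $B_n^{W,\sharp}$, with $\sharp\in\{\mathsf E,\mathsf{Tr}\}$. The estimate \eqref{RS-app} gives $\EX|A_n^\sharp|^2=O(\Delta^{1+2H})=o(n^{-2H})$; for $\sharp=\mathsf E$ this uses the rewriting $\tfrac{1}{2}\sum_k(e^{2Y_{k\Delta}}+e^{2Y_{(k+1)\Delta}})\Delta=\sum_k e^{2Y_{k\Delta}}\Delta+\tfrac{1}{2}(e^{2Y_T}-e^{2Y_0})\Delta$ displayed in the text just after \eqref{RS-app}. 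Since $W$ is independent of $(V,B)$ while both $A_n^\sharp$ and $B_n^V$ are $\sigma(V,B)$-measurable, conditioning on $\sigma(V,B)$ and using that the It\^o or trapezoidal $W$-increments have zero conditional mean gives $\EX[A_n^\sharp B_n^{W,\sharp}]=\EX[B_n^V B_n^{W,\sharp}]=0$. Combined with Cauchy--Schwarz applied to the mixed $A_n^\sharp B_n^V$ term, this yields
\[
\EX|X_T-X_n^\sharp|^2 \;=\; \rho^2\,\EX|B_n^V|^2 + (1-\rho^2)\,\EX|B_n^{W,\sharp}|^2 + o(n^{-2H}).
\]

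For $B_n^V$ and $B_n^{W,\mathsf E}$ the integrand $e^{Y_s}-e^{Y_{t_k}}$ is $\mathcal{F}_s$-adapted, so the It\^o isometry together with orthogonality of disjoint stochastic increments --- both of which are indifferent to any correlation between the driver and the integrand --- render each squared error equal to $\sum_{k=0}^{n-1}\int_{t_k}^{t_{k+1}}\EX|e^{Y_s}-e^{Y_{t_k}}|^2\,\dif s$. Writing $e^{Y_t}=e^\mu Z_t^{(1)}$ and applying the first identity of Lemma \ref{smoothess_Z} with $a=1$ gives $\EX|e^{Y_s}-e^{Y_{t_k}}|^2=2c_1(1,\theta,R_Y(0))e^{2\mu}(s-t_k)^{2H}+o((s-t_k)^{2H})$, with an $o$-remainder that is uniform in $k$ by stationarity of $Z^{(1)}$. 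Summation delivers the common asymptotic $\tfrac{2c_1 e^{2\mu}T^{2H+1}}{2H+1}n^{-2H}+o(n^{-2H})$, and since $\rho^2+(1-\rho^2)=1$ the Euler formula $C_{\mathsf E}$ falls out at once.

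For the trapezoidal $W$-error the first identity of Lemma \ref{lem_cond_help_2} (applicable because $W\perp Y$) rewrites the $L^2$-error as $\sum_{k=0}^{n-1}\int_{t_k}^{t_{k+1}}\EX\bigl|e^{Y_s}-\tfrac{1}{2}(e^{Y_{t_k}}+e^{Y_{t_{k+1}}})\bigr|^2\,\dif s$, and the second identity of Lemma \ref{smoothess_Z} with $u=s$ shows that each integrand equals $c_1 e^{2\mu}\bigl((t_{k+1}-s)^{2H}+(s-t_k)^{2H}-\tfrac{1}{2}\Delta^{2H}\bigr)+o(\Delta^{2H})$ uniformly in $k$. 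Integrating and summing over $k$ produces $\bigl(\tfrac{2}{2H+1}-\tfrac{1}{2}\bigr)c_1 e^{2\mu}T^{2H+1}n^{-2H}+o(n^{-2H})$. Substituting this and the $V$-contribution into the decomposition and simplifying gives $C_{\mathsf{Tr}}=C_{\mathsf E}-(1-\rho^2)\tfrac{e^{2\mu}c_1 T^{2H+1}}{2}$, as claimed. The main obstacle is arithmetic bookkeeping: one must keep the $o$-remainders produced by Lemma \ref{smoothess_Z} uniform across all $n$ subintervals so that their cumulative contribution stays within $o(n^{-2H})$, and the stationarity of $Z^{(1)}$ is the single structural fact that makes this uniformity automatic.
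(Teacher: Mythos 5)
Your proposal is correct and follows essentially the same route as the paper's proof: the drift error is absorbed via \eqref{RS-app}, the Euler-type $V$- and $W$-errors are computed by the It\^o isometry together with Lemma \ref{smoothess_Z}, the trapezoidal $W$-error via Lemma \ref{lem_cond_help_2} and Lemma \ref{smoothess_Z}, and the cross terms vanish or are negligible by independence of $W$ from $(V,B)$ and Cauchy--Schwarz. Your explicit treatment of the mixed drift--$V$ term and of the uniformity of the $o(\cdot)$ remainders merely spells out steps the paper leaves implicit.
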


Note that
$$  \theta^2 {\bf E} |e^{Y_0}|^2 =   \theta^2  e^{2 \mu}  {\bf E}|Z_0^{(1)}|^2=   \theta^2 e^{2 \mu } e^{2 R_Y(0)} = 2 e^{2\mu}
c_1(1,\theta,R_Y(0))  .$$

\begin{proof}
The Riemann integral part of $X_T$, i.e. $X^{RS}_T$, is considered above. Recalling $\Delta=T/n$, $Y_t=e^{\mu}Z^{(1)}_t$ and using the It\=o isometry and Lemma \ref{smoothess_Z} we  have
\begin{align*}
\EX \left|\int_0^T e^{Y_s}\dif V_s-\sum_{k=0}^{n-1} e^{Y_{k\Delta}}\Delta_k V\right|^2&=\EX\left|\sum_{k=0}^{n-1}\int_{k\Delta}^{(k+1)\Delta}(e^{Y_s}-e^{Y_{k\Delta}})\dif V_s\right|^2
\\ & =e^{2\mu} \sum_{k=0}^{n-1}\int_{k\Delta}^{(k+1)\Delta}\EX|Z^{(1)}_s-Z^{(1)}_{k\Delta}|^2\dif s
\\ & = 2c_1 e^{2 \mu} \sum_{k=0}^{n-1} \int_{k\Delta}^{(k+1)\Delta} \left( (s-k\Delta)^{2H} + o(|s-k\Delta|^{2H}) \right)\dif s\\ &=
\frac{2c_1e^{2\mu}}{2H+1}n\Delta^{2H+1}(1+o(1)), 
\end{align*} 
with $c_1=c_1(1,\theta,R_Y(0))$
and analogously $$\EX \left|\int_0^T e^{Y_s}\dif W_s-\sum_{k=0}^{n-1} e^{Y_{k\Delta}}\Delta_k W\right|^2 = \frac{2c_1e^{2\mu}}{2H+1}n\Delta^{2H+1}(1+o(1)). $$
Summing up both terms using the orthogonality of the stochastic integrals we end up with the statement for the Euler scheme.

For the analysis of the trapezoidal scheme we have that
\begin{gather*}
\EX \left|\int_0^T Z_s^{(1)}\dif W_s- \frac{1}{2} \sum_{k=0}^{N-1} (Z^{(1)}_{k \Delta}+Z^{(1)}_{(k+1) \Delta})\Delta_k W \right|^2=\sum_{k=0}^{n-1}\int_{k \Delta}^{(k+1) \Delta}\EX \left|Z_t^{(1)}- \frac{1}{2} (Z_{k \Delta}^{(1)} + Z_{(k+1) \Delta}^{(1)})\right|^2  \dif t
\end{gather*}
by Lemma \ref{lem_cond_help_2}. Due to Lemma  \ref{smoothess_Z} we have
\begin{align*}
& \sum_{k=0}^{n-1}\int_{k \Delta}^{(k+1) \Delta}\EX \left|Z_t^{(1)}- \frac{1}{2} (Z_{k \Delta}^{(1)} + Z_{(k+1) \Delta}^{(1)})\right|^2  \dif t \\ & \qquad = c_1 \sum_{k=0}^{n-1}\int_{k \Delta}^{(k+1) \Delta} \left(  |t-k \Delta|^{2H}+|(k+1) \Delta-t|^{2H}-\frac 1 2 \Delta^{2H} +o(\Delta^{2H}) \right) \dif t
\\ & \qquad  =c_1 \left( \frac{2}{2H+1} -\frac{1}{2} \right)n\Delta^{2H+1}+o(\Delta^{2H}).
\end{align*}
Using this estimate, taking into account the correlation and the Euler estimate for the $\dif V$-integral, we obtain our assertion.

\end{proof}

\section{The order barrier}

\subsection{Conditional Expectations}
Let
\begin{align} \mathcal{G}_n = \sigma (V_{kT/n}, W_{kT/n},Y_{kT/n}, \, k=0, \ldots, n), \qquad \mathcal{H}_n= \sigma (V_{t},Y_t, t \geq 0, W_{kT/n}, \, k=0, \ldots, n)
\end{align}
and
\begin{align}
X_T^V= \rho \int_0^T e^{Y_s} \dif V_s, \qquad  \qquad
 X_T^W= \sqrt{1- \rho^2} \int_0^{T} e^{Y_s} \dif W_s.
\end{align}
We start with some representation formulae for the involved conditional expectations.

\begin{lemma} \label{lem_cond_help} (i) We have
 \begin{align}
  \EX (X_T^W | \mathcal{G}_n )&=  \sqrt{1- \rho^2} \int_{0}^T   \mathbf{E}\big( e^{Y_s} \big |  V_{kT/n},Y_{kT/n}, \, k=0, \ldots, n \big) \, d \overline{W}_s^n,
\label{cond-exp-XW}
 \end{align}
where $$\overline{W}^n_t = W_{kT/n} + \left(\frac{nt}{T} -k \right) \left( W_{(k+1)T/n} -   W_{kT/n}  \right), \qquad t \in [kT/n, (k+1)T/n], \,\,\,\,  k=0, \ldots, n-1, $$
 and
  \begin{align}
  \EX (X_T^W | \mathcal{H}_n )&=  \sqrt{1- \rho^2} \int_{0}^T   e^{Y_s}  \, d \overline{W}_s^n.
\label{cond-exp-XW}
 \end{align}
 
(ii) It holds 
 \begin{align*}
  \EX (X_T^V| \mathcal{G}_n )= \rho V_Te^{Y_T} - \rho \lim_{ \varepsilon \rightarrow 0 } \frac{1}{\varepsilon}    \mathbf{E}\Big(  \int_{0}^T V_s e^{{Y}_s^{\varepsilon}} ({Y}_s -   {Y}_{s-\varepsilon})
  \, d s \Big |  V_{kT/n},Y_{kT/n}, \, k=0, \ldots, n \Big) 
   \end{align*}
   with $${Y}_t^{\varepsilon} =  \frac{1}{\varepsilon}\int_{t-\varepsilon}^t Y_s \dif s, \qquad t \in \mathbb{R}, \quad 
   \varepsilon >0.$$
   \end{lemma}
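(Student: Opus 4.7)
My plan is to handle the two parts separately, as they rest on different mechanisms. For part (i) the key fact is the independence of $W$ from $(V,B)$, while part (ii) requires a regularisation argument to overcome the fact that $Y$ is not a semimartingale.

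\textbf{Part (i).} I would first prove the $\mathcal H_n$-version and then descend to $\mathcal G_n$ via the tower property. Consider the decomposition $W=\overline W^n+\widetilde W^n$ with $\widetilde W^n:=W-\overline W^n$ a concatenation of independent Brownian bridges vanishing at the grid endpoints; crucially, $\widetilde W^n$ is independent of $(V,Y,W_{kT/n}:k=0,\ldots,n)$. Splitting the It\=o integral along this decomposition,
\[
\int_0^T e^{Y_s}\dif W_s=\int_0^T e^{Y_s}\dif\overline W^n_s+\int_0^T e^{Y_s}\dif\widetilde W^n_s,
\]
the first summand is a pathwise Riemann--Stieltjes integral (since $\overline W^n$ is piecewise linear) and is $\mathcal H_n$-measurable, while conditionally on $\mathcal H_n$ the second summand is a centred Gaussian functional of the bridges and thus vanishes in expectation. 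For the $\mathcal G_n$-formula I would note that $\overline W^n$ is $\mathcal G_n$-measurable, so by Fubini applied to the piecewise-linear measure $\dif\overline W^n_s$ the conditional expectation passes through the integral; the independence of $W$ from $(V,Y)$ then reduces the outer conditioning to $(V_{kT/n},Y_{kT/n})$.

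\textbf{Part (ii).} The main idea is to regularise $Y$ by the moving average $Y^\varepsilon_t=\varepsilon^{-1}\int_{t-\varepsilon}^tY_s\dif s$ and pass to the limit $\varepsilon\downarrow 0$. Since $Y$ has continuous sample paths, $Y^\varepsilon$ is absolutely continuous in $t$ with derivative $\varepsilon^{-1}(Y_t-Y_{t-\varepsilon})$, so $e^{Y^\varepsilon}$ is of finite variation and has vanishing quadratic covariation with $V$. Applying the It\=o product rule to $V_t e^{Y^\varepsilon_t}$ on $[0,T]$ and using $V_0=0$ gives
\[
V_T e^{Y^\varepsilon_T}=\int_0^T e^{Y^\varepsilon_t}\dif V_t+\frac{1}{\varepsilon}\int_0^T V_t e^{Y^\varepsilon_t}(Y_t-Y_{t-\varepsilon})\dif t.
\]
Pathwise continuity yields $Y^\varepsilon_t\to Y_t$; Gaussian moments of $Y$ together with dominated convergence give $e^{Y^\varepsilon_t}\to e^{Y_t}$ in $L^2$, and the It\=o isometry then delivers $\int_0^T e^{Y^\varepsilon_t}\dif V_t\to\int_0^T e^{Y_t}\dif V_t$ in $L^2$. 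Combined with $V_Te^{Y^\varepsilon_T}\to V_Te^{Y_T}$ in $L^2$, the drift term on the right-hand side also converges in $L^2$. Taking the conditional expectation given $\mathcal G_n$, using its $L^2$-continuity to commute it with the limit, and exploiting the $\mathcal G_n$-measurability of $V_T$ and $Y_T$ yields the stated formula after multiplication by $\rho$.

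\textbf{Main obstacle.} I expect the main technical effort to lie in the $L^2$-convergence statements of part (ii), in particular in obtaining uniform-in-$\varepsilon$ control of $e^{2Y^\varepsilon_t}$ and of the drift density $\varepsilon^{-2}V_t^2 e^{2Y^\varepsilon_t}(Y_t-Y_{t-\varepsilon})^2$. The former is handled by Gaussian tail bounds on the stationary process $Y$; the latter additionally uses H\"older regularity of $Y$ of any order $\gamma<H$, which controls the increment $Y_t-Y_{t-\varepsilon}$ on the $\varepsilon^H$ scale. Part (i), by contrast, is essentially a measure-theoretic unpacking of the independence of $W$ from $(V,B)$ together with the Brownian-bridge decomposition.
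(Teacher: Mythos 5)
Your proposal is correct and takes essentially the paper's route: your Brownian-bridge decomposition of $W$ in part (i) is exactly the normal-correlation-theorem argument the paper applies to the Riemann sums approximating the It\=o integral (with the same reduction of the conditioning via independence of $W$ from $(V,Y)$), and part (ii) uses the same regularisation $Y^\varepsilon$, the same integration-by-parts identity, Fernique-type domination for the $L^2$-limits, and $L^2$-continuity of conditional expectation. Only note that the uniform-in-$\varepsilon$ control of the drift density $\varepsilon^{-2}V_t^2e^{2Y^\varepsilon_t}(Y_t-Y_{t-\varepsilon})^2$ you anticipate as the main obstacle is neither available (pathwise $\varepsilon^{-1}(Y_t-Y_{t-\varepsilon})$ is of order $\varepsilon^{H-1}$) nor needed, since, as you in fact argue, the drift term converges in $L^2$ simply as the difference of the two convergent terms in the product-rule identity.
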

\begin{proof}
The proof relies on the fact that for any sub-$\sigma$-algebra $\mathcal{G}$ of $\mathcal{F}$  and random variables $Z$, $Z_n$, $n \in \mathbb{N}$, we have  that
\begin{align} \label{conv_ce} {\small \lp^2-} \lim_{n \rightarrow \infty} Z_n =Z \quad \Longrightarrow  \quad {\small \lp^2}-\lim_{n \rightarrow \infty}  \mathbf{E} (Z_n | \mathcal{G})=  \mathbf{E} (Z | \mathcal{G}). \end{align}

(i) 
We start with the first equality. Consider the $\mathsf L^2$-approximation $X^W_T/ \sqrt{1-\rho^2} =\lim_{N\to\infty} S_N$ where
$$S_N=\sum_{l=0}^{N-1} e^{Y_{Tl/N}}(W_{(l+1)T/N}-W_{Tl/N}),\quad N\in \mathbb{N}.$$ Since $W$ is independent of $(V, Y)$, one can write 
$$ \EX(S_N| \mathcal G_n)= \sum_{l=0}^{N-1}\EX(e^{Y_{Tl/N}}|V_{kT/n},Y_{kT/n},k=0,\ldots,n)\EX(W_{(l+1)T/N}-W_{Tl/N}|W_{kT/n},k=0,\ldots,n).$$ Due to the 
normal correlation theorem we have 
$$\EX( W_{T(l+1)/N}-W_{Tl/N}|W_{kT/n}, k=0,\ldots,n)=\overline{W}^n_{T(l+1)/N} -\overline{W}^n_{Tl/N}$$ and hence 
$$
\EX(S_N| \mathcal G_n)=\sum_{l=0}^{N-1} \EX(e^{Y_{Tl/N}}|V_{kT/n},Y_{kT/n},k=0,\ldots,n)(\overline{W}^n_{(l+1)T/N} -\overline{W}^n_{Tl/N}).
$$
Since $\overline{W}^n_{\cdot}(\omega)$ is piecewise differentiable,  it follows
$$ \lim_{N \rightarrow \infty} \EX(S_N| \mathcal G_n) \stackrel{ {\bf P}-a.s.}{=}
 \int_0^T\EX(e^{Y_s}|V_{kT/n},Y_{kT/n},k=0,\ldots,n) \dif \overline{W}^n_s, $$
which finishes the proof of \eqref{cond-exp-XW} using \eqref{conv_ce}.

The second assertion can be shown analogously.

(ii)
To prove the second equality introduce the following family of random variables $$ X^{V,\varepsilon}_T=\rho \int_0^T e^{Y_t^{\varepsilon}}\dif V_t, \quad  \varepsilon>0.$$
The It\=o isometry, the mean value theorem and H\"older's inequality give that
$$     {\mathbf E} | X^{V,\varepsilon}_T  -    X^{V}_T|^2 = \int_0^T  {\mathbf E}| e^{Y_t^{\varepsilon}}-e^{Y_t} |^2 \dif t  \leq 
\int_0^T   \left( \mathbf{E} \left( \int_0^1  e^{ \lambda Y_t^{\varepsilon} +  (1-\lambda) Y_t}  \dif \lambda \right)^{4} \right)^{1/2}   \left( {\mathbf{E} |Y_t^{\varepsilon}}-Y_t|^4 \right)^{1/2} \dif t. 
$$
Since $$
|Y_t^{\varepsilon}| \leq \sup_{t \in [-1,T]} |Y_t| \qquad \textrm{for} \quad t \in [0,T], \,\, \varepsilon \in (0,1],$$
it follows that
 $$ {\mathbf E} | X^{V,\varepsilon}_T  -    X^{V}_T|^2  \leq C  \int_0^T \left( {\mathbf{E} |Y_t^{\varepsilon}}-Y_t|^4 \right)^{1/2} \dif t$$
 for $\varepsilon \in (0,1]$
 with $$C^2=    \mathbf{E}  e^{ 4 \sup_{t \in [-1,T]} | Y_t|}< \infty$$
 due to Fernique's theorem.
Finally we have
$$     \left( {\mathbf{E}} |Y_t^{\varepsilon}-Y_t|^4 \right)^{1/2} = c \, {\mathbf{E}} |Y_t^{\varepsilon}-Y_t|^2 \leq c \sup_{ s \in [t- \varepsilon,t]} {\mathbf{E}|Y_t-Y_s|^2}, \qquad t \in [0,T], $$
for some constant $c>0$ (again by Gaussianity of $(Y,Y^{\varepsilon})$, respectively the definition of $Y^{\varepsilon}$) and so Lemma \ref{cor_Y} implies that
$$ \lim_{\varepsilon \rightarrow 0} \int_0^T   \left( {\mathbf{E} |Y_t^{\varepsilon}}-Y_t|^4 \right)^{1/2} \dif t =0,$$
which shows that
$$     \lim_{\varepsilon \rightarrow 0}  {\mathbf E} | X^{V,\varepsilon}_T  -    X^{V}_T|^2 = 0.$$
Consequently, we have
\begin{gather*}
\EX( X^V_T| \mathcal G_n)=\lim_{\varepsilon \rightarrow 0 } \EX(X^{V,\varepsilon}_T| \mathcal G_n).
\end{gather*}

For all $\varepsilon>0$  the map
$ [0,T] \ni t \mapsto {Y}_t^{\varepsilon}(\omega) \in \mathbb{R}$ is differentiable, so partial integration gives
$$X^{V,\varepsilon}_T=\rho e^{Y_T^{\varepsilon}}V_T-\frac{\rho}{\varepsilon}\int_0^T V_t (Y_t-Y_{t-\varepsilon})e^{Y^\varepsilon_t}\dif t$$
and thus
$$ \EX( X^V_T| \mathcal G_n)
=\rho V_Te^{Y_T}-\rho\lim_{\varepsilon \rightarrow 0}\frac{1}{\varepsilon} \EX \left( \int_0^T V_t (Y_t -Y_{t-\varepsilon}) e^{Y^{\varepsilon}_t} \dif t \Big{|} V_{kT/n},Y_{kT/n},k=0,\ldots,n \right), $$
since $Y$ and $V$ are independent of $W$.
\end{proof}

The previous Lemma in particular implies that
\begin{align*}
  \EX (X_T^W | \mathcal{G}_n )&=  \sqrt{1- \rho^2} \sum_{l=0}^{n-1}  \frac{n}{T} \Delta_l W \int_{lT/n}^{(l+1)T/n}  \mathbf{E}\big( e^{Y_s} \big |  V_{kT/n},Y_{kT/n}, \, k=0, \ldots, n \big) \, \dif s 
 \end{align*}
and that $\EX (X_T^V | \mathcal{G}_n ) = \EX (X_T^V |  V_{kT/n},Y_{kT/n}, \, k=0, \ldots, n )$. Consequently we obtain that
$$\EX(\EX(X_T^W|\mathcal G_n)X_T^V)=\EX(\EX(X^V_T|\mathcal G_n)X^W_T)=\EX(\EX(X^V_T|\mathcal G_n)\EX(X^W_T|\mathcal G_n))=0,$$
since $(V,Y)$ and $W$ are independent. Since moreover  $\EX(X_T^WX^V_T)=0$ by independence of $W$ and $V$ it follows that
$$  \EX (X_T^W-  \EX (X_T^W | \mathcal{G}_n ))  (X_T^V-  \EX (X_T^V | \mathcal{G}_n )) = 0.$$
This yields
\begin{align} \label{lower_b_split}
\EX \left| X_T^W + X_T^V -  \EX (X_T^W+X_T^V | \mathcal{G}_n ) \right|^2 & =   \EX \left| X_T^W  -  \EX (X_T^W | \mathcal{G}_n ) \right|^2 +   \EX \left| X_T^V  -  \EX (X_T^V | \mathcal{G}_n ) \right|^2  \nonumber  \\ & \geq \EX \left| X_T^W  -  \EX (X_T^W | \mathcal{G}_n ) \right|^2, 
\end{align}
i.e. we can establish a lower bound for the minimal error by considering only the It\=o integral with respect to $W$.The optimal approximation of the $\dif V$-integral
seems to be much harder to analyse due the dependence of $Y$ and $V$.

After these preparations we can establish our lower error bound:

\begin{theorem}\label{lower_b}
In the notation above the following holds 
$$\liminf_{n\to \infty} \, n^{2H} \,\,  \EX(X_T-\EX(X_T|\mathcal G_n))^2 \geq  (1-\rho^2) \frac{2}{(2H+1)(2H+2)} T^{2H+1} e^{2 \mu}  c_1(1,\theta,R_Y(0)). $$
\end{theorem}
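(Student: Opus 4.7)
The plan is to exploit monotonicity of the conditional expectation in the conditioning $\sigma$-algebra to reduce the lower bound to the error of approximating \emph{only} the $W$-driven integral, and then to evaluate that error explicitly via the representation in Lemma \ref{lem_cond_help}(i) and the variance identity in Lemma \ref{lem_cond_help_2}. Since $\mathcal{G}_n \subseteq \mathcal{H}_n$ and conditional expectation is the $\lp^2$-projection, we have
\begin{equation*}
\EX|X_T - \EX(X_T|\mathcal{G}_n)|^2 \;\geq\; \EX|X_T - \EX(X_T|\mathcal{H}_n)|^2.
\end{equation*}
The Riemann integral part $-\frac 12\int_0^T e^{2Y_s}\dif s$ is $\sigma(Y)$-measurable and $X_T^V = \rho\int_0^T e^{Y_s}\dif V_s$ is $\sigma(Y,V)$-measurable, hence both are $\mathcal{H}_n$-measurable. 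Consequently
\begin{equation*}
X_T - \EX(X_T|\mathcal{H}_n) \;=\; X_T^W - \EX(X_T^W|\mathcal{H}_n),
\end{equation*}
so it suffices to produce the asserted lower bound for $\EX|X_T^W - \EX(X_T^W|\mathcal{H}_n)|^2$.

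By the second formula in Lemma \ref{lem_cond_help}(i), $\EX(X_T^W|\mathcal{H}_n) = \sqrt{1-\rho^2}\int_0^T e^{Y_s}\,\dif\overline{W}_s^n$. The integrand $R_t := e^{Y_t}$ is $\mathcal{F}_t$-adapted, independent of $W$ (since $B$ and $W$ are independent), and by Lemma \ref{smoothess_Z} with $a=1$ has mean-square Hölder smoothness of order $H$. Thus the second identity of Lemma \ref{lem_cond_help_2} applies and yields
\begin{equation*}
\EX|X_T^W - \EX(X_T^W|\mathcal{H}_n)|^2 \;=\; (1-\rho^2)\,e^{2\mu}\sum_{k=0}^{n-1}\int_{t_k}^{t_{k+1}} \EX\Big| Z_t^{(1)} - \tfrac{1}{\Delta}\int_{t_k}^{t_{k+1}} Z_s^{(1)}\,\dif s\Big|^2 \dif t,
\end{equation*}
where I used $e^{Y_t} = e^{\mu}Z_t^{(1)}$ and $\Delta = T/n$.

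Next, using stationarity of $Z^{(1)}$ and the constancy of $\EX Z_t^{(1)}$, the inner expectation expands into the covariance identity
\begin{equation*}
R_{Z^{(1)}}(0) \;-\; \tfrac{2}{\Delta}\int_{t_k}^{t_{k+1}} R_{Z^{(1)}}(|t-s|)\,\dif s \;+\; \tfrac{1}{\Delta^2}\int_{t_k}^{t_{k+1}}\!\!\int_{t_k}^{t_{k+1}} R_{Z^{(1)}}(|s-s'|)\,\dif s\,\dif s'.
\end{equation*}
Substituting the expansion $R_{Z^{(1)}}(\tau) = c_0 - c_1\tau^{2H} + o(\tau^{2H})$ from Lemma \ref{benhenni-help}, the $c_0$ contributions cancel identically, and a short computation using $\int_0^\Delta\!\int_0^\Delta|u-v|^{2H}\dif u\,\dif v = \frac{2\Delta^{2H+2}}{(2H+1)(2H+2)}$ shows that the $c_1$ part yields $\frac{2c_1\Delta^{2H+1}}{(2H+1)(2H+2)}(1+o(1))$ after integrating $t$ over $[t_k,t_{k+1}]$. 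Summing over the $n$ intervals and noting $n\Delta^{2H+1} = T^{2H+1}n^{-2H}$ produces
\begin{equation*}
\EX|X_T^W - \EX(X_T^W|\mathcal{H}_n)|^2 \;=\; (1-\rho^2)\,\frac{2\,e^{2\mu}\,c_1\,T^{2H+1}}{(2H+1)(2H+2)}\,n^{-2H}(1+o(1)),
\end{equation*}
which, combined with the reduction step, yields the claim.

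The main technical obstacle is to control the $o(\tau^{2H})$ remainder in $R_{Z^{(1)}}$ uniformly in the triple integral as $\Delta \to 0$. This follows from the standard observation that for any $\varepsilon > 0$ there exists $\delta > 0$ with $|R_{Z^{(1)}}(\tau) - c_0 + c_1\tau^{2H}| \leq \varepsilon\tau^{2H}$ for $\tau \in [0,\delta]$; once $\Delta < \delta$ all involved arguments lie in this range, so the remainder contributes at most $\varepsilon$ times the leading term uniformly across all subintervals. Letting first $n\to\infty$ and then $\varepsilon \to 0$ completes the analysis.
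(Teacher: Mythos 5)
Your proof is correct and produces the right constant, and its core step --- reducing the problem to the exact error $\EX|X_T^W-\EX(X_T^W|\mathcal{H}_n)|^2$ and evaluating that quantity through the representation $\EX(X_T^W|\mathcal{H}_n)=\sqrt{1-\rho^2}\int_0^T e^{Y_s}\,\dif\overline{W}^n_s$ of Lemma \ref{lem_cond_help}(i) together with the second identity of Lemma \ref{lem_cond_help_2} --- is the same as in the paper. Where you genuinely deviate is the reduction: the paper first disposes of the Riemann part via the trapezoidal bound \eqref{RS-app} (so that its contribution is $o(n^{-2H})$), then discards the $\dif V$-error using the orthogonality relation \eqref{lower_b_split}, and only afterwards passes from $\mathcal{G}_n$ to $\mathcal{H}_n$ for the $W$-integral alone; you instead condition the whole of $X_T$ on $\mathcal{H}_n$ in one stroke and note that both $-\frac12\int_0^T e^{2Y_s}\dif s$ and $X_T^V$ are $\mathcal{H}_n$-measurable, so they drop out exactly. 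This makes \eqref{RS-app} and the orthogonality argument unnecessary and is arguably cleaner; the only thing you lose is the sharper intermediate statement that already the $\mathcal{G}_n$-error of $X_T^W$ alone obeys the bound. In the final asymptotics you expand via the covariance function of Lemma \ref{benhenni-help} (the $c_0$-terms cancel and $\int_0^\Delta\int_0^\Delta|u-v|^{2H}\dif u\,\dif v=\frac{2\Delta^{2H+2}}{(2H+1)(2H+2)}$), whereas the paper expands the cross-covariances via Lemma \ref{smoothess_Z} and uses a symmetry cancellation in the resulting triple integral; the two computations are equivalent and both give $\frac{2c_1\Delta^{2H+1}}{(2H+1)(2H+2)}$ per subinterval, hence the stated $n^{-2H}$-asymptotics after summation. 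Your $\varepsilon$--$\delta$ treatment of the $o(\tau^{2H})$ remainder, which by stationarity applies uniformly over all subintervals once $\Delta$ is small, is exactly the control needed to justify the summation, so no gap remains.
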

\begin{proof}
First note that
$$ \lim_{n \rightarrow \infty} n^{2H} \, \EX | X_T^{RS}-\EX(X_T^{RS} |\mathcal G_n)|^2=0$$
by \eqref{RS-app}.
Using this, \eqref{lower_b_split} and $ \mathcal{G}_n \subset \mathcal{H}_n$ it follows that 
$$ \liminf_{n\to \infty}n^{2H} \,  \EX |X_T-\EX(X_T|\mathcal{G}_n)|^2\geq \liminf_{n\to \infty}n^{2H} \, \EX|X^W_T-\EX(X^W_T|\mathcal{G}_n)|^2
\geq \liminf_{n\to \infty}n^{2H} \, \EX |X^W_T-\EX(X^W_T|\mathcal{H}_n)|^2.$$
The  Lemmata \ref{lem_cond_help}  and \ref{lem_cond_help_2} imply
$$ \EX |X^W_T-\EX(X^W_T|\mathcal{H}_n)|^2 = (1-\rho^2) \frac{n^2}{T^2} \sum_{k=0}^{n-1}\int_{kT/n}^{(k+1)T/n}\EX \left|\int_{kT/n}^{(k+1)T/n}(e^{Y_t}-e^{Y_s}) \dif s \right|^2\dif t.$$
Since
\begin{align*}
& \EX \left|\int_{kT/n}^{(k+1)T/n}(e^{Y_t}-e^{Y_s}) \dif s \right|^2
\\ \quad  & =  e^{2\mu}\int_{kT/n}^{(k+1)T/n} \int_{kT/n}^{(k+1)T/n} \EX (Z_t^{(1)} - Z_{s_1}^{(1)})  (Z_t^{(1)} - Z_{s_2}^{(1)}) \dif s_1 \dif s_2
\\  \quad &  = e^{2 \mu}c_1(1,\theta,R_Y(0)) \int_{kT/n}^{(k+1)T/n} \int_{kT/n}^{(k+1)T/n}  \left(  |t-s_1|^{2H}+|t-s_2|^{2H}-|s_1-s_2|^{2H} \right)\dif s_1 \dif s_2 + o(n^{-2H-2})
\end{align*}
by Lemma \ref{smoothess_Z}, it follows 
\begin{align*}
&  \frac{1}{c_1 e^{2 \mu} }\int_{kT/n}^{(k+1)T/n}\EX \left|\int_{kT/n}^{(k+1)T/n}(e^{Y_t}-e^{Y_s}) \dif s \right|^2\dif t
\\ & \quad =  \int_{kT/n}^{(k+1)T/n} \int_{kT/n}^{(k+1)T/n} \int_{kT/n}^{(k+1)T/n}  \left(  |t-s_1|^{2H}+|t-s_2|^{2H}-|s_1-s_2|^{2H} \right)\dif s_1 \dif s_2 \dif t \\ & \qquad + o(n^{-2H-3})
\\ & \quad =  \int_{0}^{T/n} \int_{0}^{T/n} \int_{0}^{T/n}  |s_1-s_2|^{2H} \dif s_1 \dif s_2 \dif t  + o(n^{-2H-3})
\\ & \quad = \frac{2}{(2H+1)(2H+2)}\Delta^{2H+3}  + o(n^{-2H-3}),
\end{align*} 
and summing up yields the assertion.
\end{proof}

\bigskip
\bigskip

\section{Joint Simulation of fOUp and Brownian motion}
Set $Y^c_t=Y_t-\mu,\, t\in \R$. Since integration by parts \eqref{ibp} gives
$$ Y^c_t = \theta \left( B_t - \lambda  e^{-\lambda t} \int_{-\infty}^{t} e^{\lambda s} B_u \dif u \right), $$
it follows that
$$ \mathbf{E}Y^c_t V_s = \theta \left(  \gamma(t,s) - \lambda e^{-\lambda t} \int_0^t e^{\lambda u} \gamma(u,s) \dif u \right), \qquad s,t \geq 0$$
In particular we have
\begin{align} \label{corr_Y_V}
\mathbf{E}Y^c_t (V_{s_2}-V_{s_1}) = \theta \left(  (\gamma(t,s_2)- \gamma(t,s_1)) - \lambda e^{-\lambda t} \int_0^t e^{\lambda u} (\gamma(u,s_2) - \gamma(u,s_1)) \dif u \right).
\end{align}

Hence for a given  $\gamma$ the covariance matrix $ C\in \mathbb{R}^{2n+1,2n+1}
$ of
$$ (Y^c, \Delta V)_{\sf disc}= (Y^c_{0}, Y^c_{T/n}, \ldots, Y^c_{T}, \,  \Delta_0 V, \Delta_1 V, \ldots, \Delta_{n-1} V),$$ 
can be computed explicitly and has the form
$$C =  \begin{pmatrix} C_{11}  &  C_{12} \\ 
 C_{21}
 &  C_{22} \end{pmatrix} = \begin{pmatrix} (\mathbf{E}  Y^c_{iT/n} Y^c_{jT/n})_{i,j =0, \ldots,n} &   (\mathbf{E} Y^c_{iT/n} \Delta_{j}V )_{i=0, \ldots,n, j=0, \ldots, n-1 }  \\ 
 ( \mathbf{E}  \Delta_i V   Y^c_{jT/n} )_{i=0, \ldots,n-1, j=0, \ldots, n }
 &  ( \mathbf{E} \Delta_i V \Delta_j V)_{i,j =0, \ldots,n-1}\end{pmatrix}.$$
Considering the increments  rather than  point evaluations of $V$  has the advantage that 
$$ C_{22}=(\mathbf{E} \Delta_i V \Delta_j V)_{i,j =0, \ldots,n-1} = {\Delta} \cdot I_{n},$$
where $I_{n}$ is the $n$-dimensional identity matrix.
Additionally, by
\eqref{corr_struc} and \eqref{corr_Y_V} we have that
$$ \mathbf{E} Y^c_{iT/n} \Delta_{j}V =0 \qquad \textrm{for} \quad i \leq j, $$
and therefore
$$  C_{12}=(\mathbf{E} Y^c_{iT/n} \Delta_{j}V )_{i=0, \ldots,n, j=0, \ldots, n-1 }=C_{21}'$$
is a lower  triangular matrix. Consequently, $C$ is a banded matrix.

We have
$$ (Y^c, \Delta V)_{\sf disc}' \stackrel{\mathsf{law}}{=}  L  \,  \left( \begin{array}{c}  \xi_1 \\ \xi_2 \\ \vdots \\  \xi_{2n+1} \end{array} \right), $$
where $L \in \mathbb{R}^{2n+1,2n+1}$ is the lower triangular matrix,  which arises from  the Cholesky decomposition of $C$, i.e. $C=LL'$, and  $\xi_i$, $i=1, \ldots, 2n+1$, are iid standard Gaussian random variables.
After precomputation of the matrix $L$, the computational cost (number of standard normal random numbers and arithmetic operations) to generate 
a sample of  $(Y^c, \Delta V)_{\sf disc}$ is $O(n^2)$. 

\medskip

Due to its stationarity  the random vector $(Y^c_0, Y^c_{T/n}, \ldots, Y^c_T)$  alone   can be sampled
 with a computational cost
of $O(n \log(n))$ using the Davis-Harte algorithm, see e.g. \cite{dh}. This method relies on the fact that after embedding $C_{11}$ in 
     a circulant matrix $C^{ce}$ of  size $m=2^{\lceil \log_2(n+1) \rceil +1}$ this matrix can be decomposed as $$ C^{ce}
= Q \Lambda Q^*, $$ where $\Lambda$ is the 
diagonal matrix of eigenvectors of $C^{ce}$, and $Q$ is the unitary matrix given by 
$$ Q_{j,k}
= \frac{1}{\sqrt{m}}  
\exp \left(  -2 \ci \pi \frac{jk}{m} \right), \qquad  j , k = 0, . . . , m-1. $$
Here $Q^*$ denotes the adjoint of $Q$, i.e. $(Q^*)_{j,k}$ is the complex conjugate of $Q_{k,j}$.
Moreover the eigenvalues of $C^{ce}$ are 
$$ \lambda_k =  \sum_{j=0}^{m-1} C^{ce}_{j,1} \exp \left(
2 \ci \pi \frac{ j k}{m} \right), \qquad k = 0, . . . , m-1.$$
Thus, we have
\begin{align}  Q \Lambda^{1/2} 
Q^* \xi  \, \stackrel{ \mathsf{law}}{=}  \,  \left( \begin{array}{c}  Y^c_0 \\ Y^c_{T/n} \\ \vdots \\  Y^c_{(m-1)T/n} \end{array} \right), \end{align}  where $\xi$ is a vector of $m$ independent standard normal random variables. 
The actual computation of the left hand side of the previous equation is carried out by using fast Fourier transformation, which leads to the computational cost
of $O(n \log(n))$.

Since 
\begin{align}  \sqrt{\Delta} Q I_n 
Q^* \xi  = \sqrt{\Delta} \xi \, \stackrel{ \mathsf{law}}{=}  \,  \left( \begin{array}{c}  \Delta_0 V \\ \Delta_1 V \\ \vdots \\  \Delta_{m-1}V, \end{array} \right), \end{align} 
it might be tempting to use 
$$ (Y^c, \Delta V)_{\sf disc}' \stackrel{\mathsf{law}}{=}    \left( \begin{array}{c}   Q \Lambda^{1/2} 
Q^* \xi \\  \sqrt{\Delta}  \xi \end{array} \right),      $$
which would have a computational cost of $O(n \log(n))$. However, this sampling procedure yields the covariance structure
$$ C_{12}= \sqrt{\Delta}  Q \Lambda^{1/2} 
Q^*, $$
which neither incorporates the covariance structure $\gamma$ nor is an upper triangular matrix, which is required by the adaptedness assumption for $B,V,W$.

\bigskip
\bigskip

\section{Appendix: Covariance function of the fOUp}
Recall that $Y_t^c=Y_t-\mu$, $t \in \mathbb{R}$.
By stationarity, i.e. \eqref{stationarity}, we only have to compute
 $ \mathbf{E} Y_0^cY_t^c$, $t \geq 0$.
 (i) First note that 
$$ \mathbf{E} Y_0^cY_t^c =  \mathbf{E} Y_0^c(Y_t^c-Y_0^c) + \mathbf{E} |Y_0^c|^2,  $$ 
and moreover
\begin{align}
\label{foup_calc_1} \mathbf{E} |Y_0^c|^2 = \theta^2 \frac{\Gamma(2H+1)}{2\lambda^{2H}},
\end{align}
see e.g. \cite{GJR}.

(ii) So it remains to consider $\mathbf{E}Y_0^c(Y_t^c-Y_0^c)$. Since \begin{align*} Y_t^c-Y_0^c =
\theta \exp(-\lambda t)   \int_{0}^t \exp(\lambda \tau) d B_{\tau} +
(\exp(-\lambda t)-1) Y_0^c
\end{align*}
the fractional It\=o isometry \eqref{frac_iso} now
gives
\begin{align*} \mathbf{E} Y_0^c(Y_t^c-Y_0^c) & = \theta^{2}
H(2H-1)\exp(-\lambda t)   \int_{0}^t \int_{-\infty}^0 \exp(\lambda
(\tau_1 + \tau_2)) |\tau_1 - \tau_2|^{2H-2} \dif  \tau_1 \dif  \tau_2 \\ &
\qquad + \theta^{2}(\exp(-\lambda t)-1)\frac{\Gamma(2H+1)}{2 \lambda^{2H}}.
\end{align*}
Hence, using step (i), we have
\begin{align} \mathbf{E} Y^c_0Y^c_t & = \theta^{2} \label{calc_cov_1}
H(2H-1)\exp(-\lambda t )   \int_{0}^t \int_{-\infty}^0 \exp(\lambda
(\tau_1 + \tau_2)) |\tau_1 - \tau_2|^{2H-2} \dif  \tau_1 \dif  \tau_2 \\ &
\nonumber \qquad + \theta^{2} \exp(-\lambda t )\frac{\Gamma(2H+1)}{2
\lambda^{2H}}.
\end{align}

\medskip

(iii) Using $\tau_1 = u + \tau_2$ we have
\begin{align*} \int_{0}^{t} \int_{-\infty}^0
\exp(\lambda (\tau_1 + \tau_2)) |\tau_1 - \tau_2|^{2H-2} \dif  \tau_1 \dif 
\tau_2 & =\int_{0}^{t} \int_{-\infty}^{-\tau_2} \exp(\lambda (u+ 2
\tau_2)) |u|^{2H-2} \dif  u  \dif \tau_2.
\end{align*}
Exchanging the order of integration  and using the transformation $u \mapsto -u$ we obtain
\begin{align*} & \int_{0}^{t} \int_{-\infty}^{-\tau_2} \exp(\lambda (u+ 2
\tau_2)) |u|^{2H-2} \dif  u  \dif \tau_2 \\ & \qquad = \int_{-\infty}^{-t} \int_{0}^{t}  \exp(\lambda (u+ 2
\tau_2)) |u|^{2H-2}  \dif \tau_2  \dif  u \\ & \qquad \quad +  \int_{-t}^{0} \int_0^{-u} \exp(\lambda (u+ 2
\tau_2)) |u|^{2H-2}  \dif \tau_2 \dif  u
\\
& \qquad  = \frac{1}{2 \lambda}  \left( \exp(2\lambda t ) -1 \right)  \int^{\infty}_{t}\exp(-\lambda u) |u|^{2H-2}   \dif  u
\\& \qquad \quad +  \frac{1}{2 \lambda}  \int^{t}_{0} \left( \exp(\lambda u) - \exp(-\lambda u) \right) |u|^{2H-2}   \dif  u 
\end{align*}
and therefore
\begin{align*} &
\exp(-\lambda t)   \int_{0}^t \int_{-\infty}^0 \exp(\lambda
(\tau_1 + \tau_2)) |\tau_1 - \tau_2|^{2H-2} \dif  \tau_1 \dif  \tau_2 \\ & \qquad =
\frac{1}{ \lambda}   \sinh(\lambda t)  \int^{\infty}_{t}\exp(-\lambda u) |u|^{2H-2}   \dif  u
\\& \qquad \quad +  \frac{1}{ \lambda}  \exp(-\lambda t) \int^{t}_{0}  \sinh(\lambda u)  |u|^{2H-2}   \dif  u 
\end{align*}

Integration by parts  gives
\begin{align*} &  \frac{2H-1}{\lambda} \sinh(\lambda t)  \int^{\infty}_{t}\exp(-\lambda u) |u|^{2H-2}   \dif  u
\\ & \qquad =   \sinh(\lambda t)  \int^{\infty}_{t}\exp(-\lambda u) |u|^{2H-1}   \dif  u - \frac{ \sinh( \lambda t) \exp(-\lambda t)}{\lambda}t^{2H-1}
\end{align*}
and
\begin{align*} &  \frac{2H-1}{\lambda}   \exp(-\lambda t) \int^{t}_{0} \sinh(\lambda u) |u|^{2H-2}   \dif  u
\\ & \qquad = \frac{1}{\lambda}  \exp(-\lambda t)\sinh(\lambda t) t^{2H-1}
\\ & \qquad \quad - \exp(-\lambda t) \int^{t}_{0}  \cosh(\lambda u)
|u|^{2H-1}\, \dif u, 
\end{align*}
since $H<1/2$.
Thus, we have
\begin{align*} & H(2H-1)
\exp(-\lambda t)   \int_{0}^t \int_{-\infty}^0 \exp(\lambda
(\tau_1 + \tau_2)) |\tau_1 - \tau_2|^{2H-2} \dif  \tau_1 \dif  \tau_2 \\ & \qquad =  H\sinh(\lambda t)  \int^{\infty}_{t}\exp(-\lambda u) |u|^{2H-1}   \dif  u -
 H\exp(-\lambda t) \int^{t}_{0}  \cosh(\lambda u)
|u|^{2H-1}\, \dif u.
\end{align*}

So, using  \begin{align} \label{gamma} \int_0^\infty t^b \exp(-at) \,\dif t = \frac{\Gamma(b+1)}{a^{b+1}} \end{align}
for $b>-1$, $a>0$ it follows 
\begin{align*}
& H(2H-1)
\exp(-\lambda t)   \int_{0}^t \int_{-\infty}^0 \exp(\lambda
(\tau_1 + \tau_2)) |\tau_1 - \tau_2|^{2H-2} \dif  \tau_1 \dif  \tau_2 \\ 
& \qquad =
  \sinh(\lambda t) \left( \frac{\Gamma(2H+1)}{2\lambda^{2H}}- H \int_0^{t}\exp(-\lambda u)|u|^{2H-1}   \dif  u \right)
\\& \qquad \quad - H \exp(-\lambda t)
\int^{t}_{0} \cosh(\lambda u) |u|^{2H-1}   \dif  u.
\end{align*}
Plugging this into \eqref{calc_cov_1} we have derived that
\begin{align} \mathbf{E} Y_0^cY_t^c& =  \theta^2 \cosh(\lambda t)\frac{\Gamma(2H+1)}{2
\lambda^{2H}} -  H  \theta^2
\int^{t}_{0} \cosh(\lambda (t-u))  |u|^{2H-1} \dif u.
\end{align}

\bibliographystyle{elsarticle-harv}
\bibliography{order-barrier}
\end{document}